\numberwithin{equation}{section}
\newtheorem{thm}{Theorem}[section]
\newtheorem{prop}[thm]{Proposition}
\newtheorem{cor}[thm]{Corollary}
\theoremstyle{definition}
\newtheorem{defin}[thm]{Definition}
\newtheorem{ex}[thm]{Example}
\newcommand{\meet}[2]{#1 \cap #2}
\newcommand{\join}[2]{\langle #1, #2 \rangle}
\newcommand{\conv}{\textrm{conv}}
\newcommand{\trace}{\textrm{trace}}
\begin{document}
\title{The limit point of the pentagram map}
\author{Max Glick}
\address{Department of Mathematics, University of Connecticut, Storrs, CT 06269, USA}
\thanks{Partially supported by NSF grant DMS-1303482}
\keywords{Pentagram map, conserved quantities}

\begin{abstract}
The pentagram map is a discrete dynamical system defined on the space of polygons in the plane. In the first paper on the subject, R. Schwartz proved that the pentagram map produces from each convex polygon a sequence of successively smaller polygons that converges exponentially to a point. We investigate the limit point itself, giving an explicit description of its Cartesian coordinates as roots of certain degree three polynomials.
\end{abstract}

\maketitle

\section{Introduction}
The pentagram map is a discrete dynamical system defined on the space of polygons in the plane.  Figure \ref{figT} shows an instance of the pentagram map, denoted $T$, acting on a polygon $A$ and producing another polygon $B$.  Each vertex $B_i$ of $B$ is constructed as the intersection of two shortest diagonals of $A$, namely $\overleftrightarrow{A_{i-1}A_{i+1}}$ and $\overleftrightarrow{A_iA_{i+2}}$.  Note that if $A$ is convex then $B$ will also be convex and will lie in the interior of $A$.

\begin{figure}[h]
\begin{pspicture}(0,.5)(6,5)
%\rput(3,-.5){
\pspolygon(1,2)(1,3)(2,4.5)(3,5)(4.5,4.5)(5,3.5)(4.5,2)(3.5,1)(2,1)
  \pspolygon[linestyle=dashed](1,2)(2,4.5)(4.5,4.5)(4.5,2)(2,1)(1,3)(3,5)(5,3.5)(3.5,1)
  \pspolygon(1.22,2.55)(1.67,3.67)(2.5,4.5)(3.67,4.5)(4.5,3.87)(4.5,2.67)(3.97,1.79)(2.75,1.3)(1.62,1.75)
	\uput[ul](2,4.5){$A$}
	\uput[dr](1.67,3.67){$B=T(A)$}
%}
\end{pspicture}
\caption{An application of the pentagram map}
\label{figT}
\end{figure}
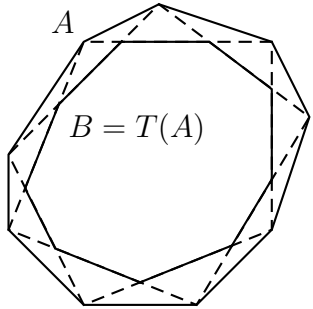

The modern study of the pentagram map was initiated by R. Schwartz in 1992 and his first main result \cite[Theorem 3.1]{S1} was that if $A$ is convex then the sequence of polygons $T^k(A)$ for $k=0,1,2,3,\ldots$ converges exponentially to a single point
$(X,Y) \in \mathbb{R}^2$.  One of the open problems in that paper asked if $X$ and $Y$ are analytic functions of the coordinates of the vertices of $A$.

The pentagram map has seen a spike in popularity in the current decade thanks largely to the discovery that it is a discrete integrable system \cite{OST1,OST2,So,GSTV}, and also because of emerging connections with cluster algebras \cite{G, GSTV}.  In a sense, the recent work differs significantly from the first paper \cite{S1} in that
\begin{enumerate}
\item for the purposes of integrability and cluster algebras, it is more natural to have the pentagram map act not on individual polygons but on projective equivalence classes of polygons, and
\item there has been a focus on generalized pentagram maps \cite{KS1,KS2,KS3,M1,M2,GP}, which are not known to possess a property analogous to preserving convexity.
\end{enumerate}

The present paper returns to the matter of the limit point $(X,Y)$ of the pentagram map acting on a convex polygon $A$.  The main result is that $X$ and $Y$ are not just analytic functions of the coordinates of the vertices of $A$, but are in fact algebraic.

\begin{thm} \label{thmMain}
Let $(x_1,y_1),\ldots, (x_n,y_n)$ be vertices of a convex $n$-gon $A$ and let 
\begin{displaymath}
(X,Y) = \lim_{k \to \infty} T^k(A).
\end{displaymath}  
Then there is a field extension of $\mathbb{Q}(x_1,y_1,\ldots, x_n,y_n)$ of degree at most $3$ containing both $X$ and $Y$.
\end{thm}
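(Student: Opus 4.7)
The plan is to show that the projective lift $(X,Y,1) \in \mathbb{R}^3$ of the limit point is an eigenvector of some $3 \times 3$ matrix $M = M(A)$ whose entries are rational functions of $x_1,y_1,\ldots,x_n,y_n$. If such an $M$ can be exhibited, the result follows cleanly from linear algebra: the characteristic polynomial $\det(M-\lambda I)$ is a cubic over $\mathbb{Q}(x_i,y_i)$, so any eigenvalue $\lambda$ generates an extension of degree at most $3$; once $\lambda$ is fixed, the eigenvector is pinned down by a linear system over $\mathbb{Q}(x_i, y_i, \lambda)$; and normalizing the last coordinate to $1$ expresses both $X$ and $Y$ in this same extension.

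To set up the algebra, I would lift each vertex to $\tilde{A}_i = (x_i, y_i, 1)$ and describe the pentagram step via projective intersection: $\tilde{B}_j$ spans $\meet{\join{\tilde{A}_{j-1}}{\tilde{A}_{j+1}}}{\join{\tilde{A}_j}{\tilde{A}_{j+2}}}$, yielding an explicit formula for $\tilde B_j$ as a linear combination of $\tilde A_{j-1}$ and $\tilde A_{j+1}$ with coefficients that are $3 \times 3$ determinants in the original vertices. This gives a transparent algebraic description of $T$ and its iterates. The matrix $M$ itself I would try to construct from the asymptotic self-similarity of the dynamics at the attractor: after renormalizing $T^k(A)$ about $(X,Y)$ to keep unit diameter, the renormalized iterates should converge to a limit polygon $A_\infty$ on which $T$ acts by a single projective contraction, and $M$ would be any lift of that contraction to $GL_3$. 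Its real attracting eigenvector is then precisely $(X, Y, 1)$ up to scalar.

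The main obstacle is rationality. The projective contraction above is manufactured through an infinite limit, so a priori it depends only transcendentally (albeit analytically) on $A$. To upgrade this to algebraic dependence, one would want an intrinsic, finite-step construction of $M$ from $A$ — for instance by identifying three lines through $(X,Y)$, or three points covariantly attached to $A$, that are preserved by the dynamics and that span the (possibly complex) eigenlines of $M$; the three of them together would encode the cubic. Failing a direct geometric construction, a fallback is to bypass $M$ entirely: combine the explicit determinantal formulas for $T$ with a fixed-point identity for the renormalized map, and eliminate auxiliary unknowns by resultants to extract a single polynomial relation for $X$ (and separately $Y$) of degree $3$ over $\mathbb{Q}(x_i,y_i)$.
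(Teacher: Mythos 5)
Your linear-algebra endgame is exactly right and matches the paper: once one has a $3\times 3$ matrix $M$ with entries in $\mathbb{Q}(x_1,y_1,\ldots,x_n,y_n)$ admitting $[X\ Y\ 1]^T$ as an eigenvector (with one-dimensional eigenspace), the cubic characteristic polynomial gives the degree-$3$ extension and $X,Y$ drop out of a linear system. But the entire content of the theorem is the construction of $M$, and that is where your proposal has a genuine gap. The paper's $M$ is the explicit map
\begin{displaymath}
L_A(v) = nv -\sum_{j=1}^n \frac{|u_{j-1}, v, u_{j+1}|}{|u_{j-1}, u_j, u_{j+1}|}\,u_j,
\end{displaymath}
which is manifestly rational in the vertex coordinates. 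The two facts that make it work are (i) $L_{T(A)} = L_A$, proved by factoring $T$ into two projective-duality involutions $\alpha_1,\alpha_2$ and showing each sends $L_A$ to its dual map $L_A^*$, and (ii) $L_A(\conv(A))\subseteq\conv(A)$, which combined with (i) forces $L_A$ to fix the point $\bigcap_k \conv(T^k(A)) = (X,Y)$. Nothing in your sketch supplies a substitute for this conserved, rationally-defined operator.

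Your proposed route to $M$ --- renormalizing $T^k(A)$ about the limit point and extracting a limiting projective contraction from a limit polygon $A_\infty$ --- would not work even before the rationality issue you flag. The pentagram map is quasiperiodic on projective (and a fortiori affine) equivalence classes of polygons: the shapes of the iterates $T^k(A)$ wander on a torus and do not converge, so the renormalized polygons have no limit $A_\infty$ and $T$ does not act near the attractor as a single iterated contraction. (The paper remarks explicitly that experiments rule out even $L_A$ playing that role.) Your fallbacks are likewise not actionable: the limit point admits no finite-step fixed-point identity to feed into a resultant unless one first finds a quantity conserved by $T$, and there is no evident covariant triple of lines or points spanning the eigenlines. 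In short, you have correctly reverse-engineered what kind of object is needed, but the key idea --- an explicit $T$-invariant element of $\mathrm{End}(\mathbb{R}^3)$ together with the convexity argument pinning the limit point as its fixed point --- is missing, and the theorem does not follow without it.
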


The proof is constructive in the sense that it provides a direct method to calculate $X$ and $Y$.  First, lift the vertices of $A$ to vectors
\begin{displaymath}
u_i = \left[\begin{array}{c} x_i \\ y_i \\ 1\end{array}\right]
\end{displaymath}
in $\mathbb{R}^3$.  Define a function $L_A:\mathbb{R}^3 \to \mathbb{R}^3$ by
\begin{equation} \label{eqL0}
L_A(v) = nv -\sum_{j=1}^n \frac{|u_{j-1}, v, u_{j+1}|}{|u_{j-1}, u_j, u_{j+1}|}u_j
\end{equation}
where $|\cdot, \cdot, \cdot|$ denotes the determinant of three vectors and all indices are taken modulo $n$.  It is easy to see that $L_A$ is linear.  

\begin{prop}
The lift $[X\ Y\ 1]^T$ of $(X,Y)$ is an eigenvector of $L_A$.
\end{prop}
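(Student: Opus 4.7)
The plan is to compute $L_A(w)$ directly and show that the resulting vector is proportional to $w=[X\ Y\ 1]^T$. For each index $j$ I would decompose $w$ in the local basis $(u_{j-1}, u_j, u_{j+1})$ of $\mathbb{R}^3$ as
\[
w = a_j u_{j-1} + b_j u_j + c_j u_{j+1}.
\]
By Cramer's rule the middle coefficient is exactly the ratio $b_j = |u_{j-1}, w, u_{j+1}|/|u_{j-1}, u_j, u_{j+1}|$ appearing in (\ref{eqL0}), so the definition collapses to
\[
L_A(w) = \sum_{j=1}^{n}(w - b_j u_j) = \sum_{j=1}^{n}(a_j u_{j-1} + c_j u_{j+1}).
\]
Each summand lies in the plane spanned by $u_{j-1}$ and $u_{j+1}$, and since all of the $u_j$ and $w$ have third coordinate $1$, the $j$th term equals $(1-b_j)$ times the unit-height lift of the point $\xi_j$ at which the line $\overleftrightarrow{wA_j}$ meets the short diagonal $\overleftrightarrow{A_{j-1}A_{j+1}}$. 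In this language the proposition becomes the geometric statement that the weighted affine centroid of the $\xi_j$, with weights $1-b_j$, equals $w$ itself.

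To establish this centroid identity I would exploit that $w$ is simultaneously the pentagram limit of $A$ and of $T(A)$. Using the classical cross-product formula $\tilde v_i = (u_{i-1} \times u_{i+1}) \times (u_i \times u_{i+2})$ for a lift of the $i$th vertex of $T(A)$, one can expand $L_{T(A)}(w)$ back in terms of the triple products already appearing in the data for $A$. The target is a transformation law of the form
\[
L_{T(A)}(w) \equiv \mu(A)\,L_A(w) \pmod{\mathbb{R}\,w}
\]
for an explicit scalar $\mu(A)$. Iterating this relation and invoking the exponential convergence $T^k(A)\to w$ proved in \cite{S1}, the component of $L_{T^k(A)}(w)$ transverse to $\mathbb{R}\,w$ must decay to zero in the limit; propagated back through the transformation law, this forces $L_A(w)$ itself to lie in $\mathbb{R}\,w$.

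The main obstacle is the explicit comparison of $L_A(w)$ and $L_{T(A)}(w)$: one has to rewrite each new ratio $|\tilde v_{j-1}, w, \tilde v_{j+1}|/|\tilde v_{j-1}, \tilde v_j, \tilde v_{j+1}|$ in terms of triple products of the $u_j$ via Pl\"ucker-type identities, while tracking which pieces of the resulting expression are parallel to $w$ and which are transverse. Isolating the scalar $\mu(A)$ and verifying the contractive character of the transverse component under iteration is the key technical step; once these are in hand, the eigenvector property follows and a formula for the associated eigenvalue drops out as a by-product of the computation.
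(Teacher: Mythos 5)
Your opening reduction is sound: decomposing $w$ in the basis $(u_{j-1},u_j,u_{j+1})$ via Cramer's rule is exactly how the paper derives its alternate formula \eqref{eqL2}, and your geometric reading of each summand is correct. But the core of your argument rests on two unestablished claims. First, the ``transformation law'' $L_{T(A)}(w) \equiv \mu(A)\,L_A(w) \pmod{\mathbb{R}w}$ is only announced as a target; you give no Pl\"ucker computation and no candidate for $\mu(A)$. The paper proves something much stronger and cleaner: $L_{T(A)} = L_A$ exactly (Theorem \ref{thmConserve}), obtained by factoring $T$ into the two involutions $\alpha_1,\alpha_2$ and showing each sends $L_A$ to its dual map $L_A^*$. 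So in fact $\mu \equiv 1$, and this exposes the second, fatal gap: with $\mu = 1$ the vector $L_{T^k(A)}(w) = L_A(w)$ is \emph{constant} in $k$, so its component transverse to $\mathbb{R}w$ ``decays to zero'' only if it is already zero --- which is precisely the statement you are trying to prove. Your limiting step is circular. Even for a general $\mu$, concluding that the transverse part of $L_A(w)$ vanishes would require simultaneously bounding the products $\prod_i \mu(T^i(A))$ away from zero and bounding $L_{T^k(A)}(w)$ itself, and you supply neither.

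The missing ingredient is convexity, which your proposal never uses. The paper's Proposition \ref{propLShrinks} shows that for $Q \in \conv(A)$ all the coefficients in \eqref{eqL2} are nonnegative (each ratio of determinants is a signed ratio of triangle areas, positive because $Q$ lies on the correct side of each edge line of a convex polygon), so $L_A(Q)$ is a convex combination of the vertices and $L_A(\conv(A)) \subseteq \conv(A)$. Combining this with $L_{T^k(A)} = L_A$ gives $L_A(X,Y) = L_{T^k(A)}(X,Y) \in \conv(T^k(A))$ for every $k$, and since $\bigcap_k \conv(T^k(A)) = \{(X,Y)\}$ by Schwartz's convergence theorem, the point $(X,Y)$ is fixed and its lift is an eigenvector. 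That trapping argument is what replaces your unproven decay claim; without it, or some substitute quantitative bound on $L_{T^k(A)}(w)$, the approach as outlined does not close.
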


As a $3 \times 3$ matrix, $L_A$ has entries in $\mathbb{Q}(x_1,y_1,\ldots, x_n,y_n)$, so the extension alluded to in Theorem \ref{thmMain} is formed by adjoining the appropriate eigenvalue.  At that point $X$ and $Y$ can be found in the extension field by solving a linear system.

\begin{ex} \label{exLimit}
Consider the convex heptagon $A$ with vertices $(2,0)$, $(3,1)$, $(3,2)$, $(2,3)$, $(1,3)$, $(0,2)$, $(0,1)$.  Applying the pentagram map five times (see Figure \ref{figLimit}) provides bounds $1.2 < X < 2.0$ and $1.6 < Y < 2.0$ on the limit point.  The formula for $L_A$ given in the next section can be used to calculate
\begin{displaymath}
L_A = \left[ \begin{array}{ccc} 
-6 & -4 & 49 \\
-1 & -7 & 51 \\
-1 & -3 & 27 \\
\end{array} \right]
\end{displaymath}
which has characteristic polynomial $\lambda^3 - 14\lambda^2 - 111\lambda -116$.  An eigenvector $[X\ Y\ 1]^T$ must satisfy
\begin{align*}
49 &= (6+\lambda)X + 4Y \\
51 &= X + (7 + \lambda)Y \\
27 - \lambda &= X + 3Y
\end{align*}
for $\lambda$ an eigenvalue.  The first two equations suffice to calculate
\begin{displaymath}
(X,Y) = \left(\frac{49 \lambda + 139}{\lambda^2 + 13\lambda + 38}, \frac{51 \lambda + 257}{\lambda^2 + 13\lambda + 38}\right).
\end{displaymath}
The eigenvalues are roughly $\lambda \approx -4.613, -1.265, 19.878$ and the third of these gives rise to the limit point
\begin{displaymath}
(X,Y) \approx (1.609, 1.838).
\end{displaymath}
\end{ex}

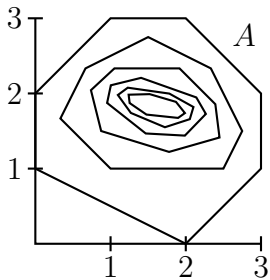
\begin{figure}
\begin{pspicture}(0,-.5)(3,3)
\psline(3,0)(0,0)(0,3)
\psline(1,-.1)(1,.1)
\psline(2,-.1)(2,.1)
\psline(3,-.1)(3,.1)
\psline(-.1,1)(.1,1)
\psline(-.1,2)(.1,2)
\psline(-.1,3)(.1,3)
\uput[d](1,0){$1$}
\uput[d](2,0){$2$}
\uput[d](3,0){$3$}
\uput[l](0,1){$1$}
\uput[l](0,2){$2$}
\uput[l](0,3){$3$}
\pspolygon(2,0)(3,1)(3,2)(2,3)(1,3)(0,2)(0,1)
\pspolygon(1.0000,1.0000)(2.5000,1.0000)(2.7500,1.5000)(2.3333,2.3333)(1.5000,2.7500)(0.6667,2.3333)(0.3333,1.6667)
\pspolygon(1.7778,1.2222)(2.4483,1.4138)(2.3929,1.8571)(1.9167,2.3333)(1.0513,2.3333)(0.7391,2.0435)(0.8750,1.5000)
\pspolygon(1.4675,1.4675)(1.9878,1.4390)(2.2670,1.7273)(2.1401,1.9469)(1.4057,2.2075)(1.0037,2.1086)(0.9540,1.8736)
\pspolygon(1.7227,1.5504)(2.0534,1.6579)(2.1019,1.8194)(1.7817,1.9979)(1.2286,2.0766)(1.0972,1.9794)(1.2698,1.7408)
\pspolygon(1.4771,1.7189)(1.8975,1.6744)(1.9886,1.7390)(1.8697,1.8878)(1.5198,1.9908)(1.2401,1.9833)(1.2537,1.8721)
\uput[ur](2.5,2.5){$A$}
\end{pspicture}
\caption{The polygon considered in Example \ref{exLimit} and its next five iterates under the pentagram map.}
\label{figLimit}
\end{figure}

The map $L_A$ is new and seems to have importance to the pentagram map beyond the problem of describing the limit point.  We list here the main properties of the map, which will be proven throughout this paper.  In the following $A$ is a generic sequence of $n$ points in $\mathbb{R}^2$, $\conv(A)$ denotes the convex hull of the vertices of $A$, and $L_A$ refers depending on context to either the linear map defined in \eqref{eqL0}, the matrix of this linear map, or the induced projective transformation of $\mathbb{R}^2 \subseteq \mathbb{P}^2$.
\begin{itemize}
\item If $A$ is a pentagon then $(L_A - 3I)(A) = T(A)$, and if $A$ is a hexagon then $(L_A - 3I)(A) = T^2(A)$ where $I$ is the identity matrix.
\item (Theorem \ref{thmConserve}) $L_{T(A)} = L_A$
\item (Proposition \ref{propLShrinks}) If $A$ is convex then $L_A(\conv(A)) \subseteq \conv(A)$.  
\item (Proposition \ref{propAxisAligned}) If $A$ is an axis-aligned $2m$-gon, that is one whose vertices satisfy
\begin{align*}
&x_1 = x_2, x_3 = x_4, \ldots, x_{2m-1}=x_{2m}, \\
&y_2 = y_3, y_4 = y_5, \ldots, y_{2m}=y_1,
\end{align*}
then
\begin{equation} \label{eqAxisAligned}
L_A = \left[\begin{array}{ccc}
m & 0 & x_1 + x_3 + \ldots + x_{2m-1} \\
0 & m & y_2 + y_4 + \ldots + y_{2m} \\
0 & 0 & 2m \end{array} \right].
\end{equation}
\end{itemize}

We now make several remarks regarding the above properties, following the same order they were listed.  The fact that $A$ is projectively equivalent to $T(A)$ (respectively $T^2(A))$ if $A$ is a pentagon (respectively hexagon) is classical.  The claim is simply that $L_A - 3I$ is the matrix for the projective transformation realizing this equivalence.  We omit the proof which is purely computational.  It is clear that the limit point must be fixed by this transformation (hence also by $L_A$), so the result of Theorem \ref{thmMain} is only really surprising for $n \geq 7$. 

Because $L_{T(A)} = L_A$, the nine entries $\phi_{i,j}$ of $L_A$ are conserved quantities of the pentagram map.  They satisfy a relation $\phi_{11} + \phi_{22} + \phi_{33} = 2n$ (Proposition \ref{propTrace}) but seem to otherwise be independent.  Note the individual $\phi_{i,j}$ are not invariant under projective transformations, so they must be different from the standard conserved quantities $O_k$ and $E_k$ (see \cite{OST1}).  However, the coefficients of the characteristic polynomial of $L_A$ are projective invariants (Corollary \ref{corProjInvs}).  As just mentioned, the trace is constant, but it would be interesting to express the other two coefficients in terms of the $O_k$ and $E_k$.

The property $L_A(\conv(A)) \subseteq \conv(A)$ can be thought of as a point of commonality with the pentagram map which also sends a convex polygon into its interior.  Schwartz speculates \cite{S1} that some projective transformation applied repeatedly to $A$ may approximate its pentagram map orbit, giving a direct explanation of the quasiperiodic property \cite{S2}.  Although experiments show that $L_A$ does not fit this bill, we can say that it in some sense goes in the right direction.  

Finally, axis-aligned polygons play a special role in the study of the pentagram map, so it is unsurprising that $L_A$ takes a simple form in this case.  Let $A$ be axis-aligned.  Schwartz \cite[Theorem 1.3]{S3} and the author \cite[Theorem 7.6]{G} showed that after a finite number of steps of the pentagram map the vertices of $A$ collapse to a single point.  Axis-aligned polygons are necessarily not convex, so Theorem \ref{thmMain} does not apply directly, but it is natural to consider this point of collapse as being the analogue of the limit point.  Rewriting \eqref{eqAxisAligned} as a map of the plane yields
\begin{displaymath}
L_A(x,y) = \left(\frac{x+X}{2}, \frac{y+Y}{2}\right)
\end{displaymath}
where $(X,Y)$ is the center of mass of the vertex set of $A$.  This formula leads quickly to a new proof of Z. Yao's theorem \cite{Y} that the center of mass equals the point of collapse.

We close the introduction with a comment on a possible future direction.  As established by Ovsienko, Schwartz and Tabachnikov \cite{OST1}, the pentagram map has a continuous limit given by a certain flow on plane curves modeled by the Boussinesq equation.  It is not hard to imagine that the results of the current paper could be extended from convex polygons to closed convex curves, with the sum in \eqref{eqL0} being replaced by an integral.

\medskip

\textbf{Acknowledgment.} I thank Richard Schwartz for several helpful discussions and for pointing out the potential extension to the continuous limit.

\section{Elementary properties of $L_A$} 
We begin with a formula for the entries of $L_A$.  Let $A$ be a polygon with vertices $(x_1,y_1),\ldots, (x_n,y_n)$.  Let $a_{ij}$ for $i \in \mathbb{Z}/(3\mathbb{Z})$ and $j \in \mathbb{Z}/(n\mathbb{Z})$ denote
\begin{displaymath}
a_{i,j} = \begin{cases}
x_j, & i=1\\
y_j, & i=2\\
1, & i=3
\end{cases}.
\end{displaymath}
Lastly, let $\phi_{i,j}$ for $i,j=1,2,3$ be the entries of $L_A$ viewed as a matrix.
\begin{prop}
\begin{equation} \label{eqphi}
\phi_{i,j} = n\delta_{i,j} - \sum_{k=1}^n \frac{(a_{j-1,k-1}a_{j+1,k+1}-a_{j-1,k+1}a_{j+1,k-1})a_{i,k}}
{\left| \begin{array}{ccc} 
a_{1,k-1} & a_{1,k} & a_{1,k+1} \\
a_{2,k-1} & a_{2,k} & a_{2,k+1} \\
a_{3,k-1} & a_{3,k} & a_{3,k+1}
\end{array}\right|}
\end{equation}
\end{prop}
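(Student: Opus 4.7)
My plan is to read off the formula directly from the definition of $L_A$, treating the proposition as essentially a computational unpacking. The key observation is that the $(i,j)$ entry $\phi_{i,j}$ of $L_A$ (as a matrix) is the $i$-th component of $L_A$ applied to the $j$-th standard basis vector $e_j$. So I will substitute $v = e_j$ into \eqref{eqL0}, extract the $i$-th coordinate, and match the result term-for-term against \eqref{eqphi}.

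After the substitution, the only nonobvious piece is the numerator $|u_{k-1}, e_j, u_{k+1}|$ (renaming the summation index from $j$ to $k$ to avoid a clash). I will expand this $3\times 3$ determinant along the middle column, which contains a single nonzero entry equal to $1$ in row $j$. This collapses the determinant to $(-1)^{j+2}$ times the $2\times 2$ minor obtained by deleting row $j$ and column $2$. The resulting minor has rows indexed by the two elements of $\{1,2,3\} \setminus \{j\}$, and the crucial cyclic observation is that for any $j \in \mathbb{Z}/3\mathbb{Z}$, this complementary set is precisely $\{j-1, j+1\}$ (mod $3$). Reading off the $2\times 2$ determinant with rows $j-1, j+1$ in that order introduces a sign depending on $j$, and a short case check ($j=1,2,3$) confirms that this sign exactly cancels the $(-1)^{j+2}$ from the cofactor expansion, yielding
\begin{displaymath}
|u_{k-1}, e_j, u_{k+1}| = a_{j-1,k-1}a_{j+1,k+1} - a_{j-1,k+1}a_{j+1,k-1}.
\end{displaymath}

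The denominator $|u_{k-1}, u_k, u_{k+1}|$ is identified with the $3\times 3$ determinant appearing in \eqref{eqphi} simply by writing out the columns of $u_\ell$ in terms of $a_{i,\ell}$, using the definition $u_\ell = (x_\ell, y_\ell, 1)^T = (a_{1,\ell}, a_{2,\ell}, a_{3,\ell})^T$. Finally, the factor $u_k$ in \eqref{eqL0} contributes its $i$-th component, namely $a_{i,k}$, and the leading term $n v = n e_j$ contributes the $n \delta_{i,j}$. Assembling these pieces produces \eqref{eqphi}. There is no real obstacle here: the only thing one has to be careful about is the bookkeeping of cyclic indices modulo $3$ and the sign of the cofactor expansion, which is where I would spend a sentence or two verifying the three cases for $j$ before combining them into the uniform statement.
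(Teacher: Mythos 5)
Your proposal is correct and is exactly the paper's proof — substitute $v=e_j$ into \eqref{eqL0} and read off the $i$-th entry — merely carried out in more detail (the paper leaves the cofactor expansion and the sign check implicit). Your verification of the sign cancellation in the three cases $j=1,2,3$ is accurate.
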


\begin{proof}
The formula is obtained by plugging $v=e_j$ into \eqref{eqL0} and taking the $i$th entry of the result.
\end{proof}

\begin{prop} \label{propTrace}
For any $n$-gon, $\trace(L_A) = 2n$.
\end{prop}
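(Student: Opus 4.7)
The plan is to exploit the defining formula
\[
L_A(v) = nv - \sum_{j=1}^n \frac{|u_{j-1},v,u_{j+1}|}{|u_{j-1},u_j,u_{j+1}|}\, u_j
\]
together with the fact that the trace of a linear map on $\mathbb{R}^3$ is $\sum_{i=1}^3 e_i^T L_A(e_i)$. The $nv$ term contributes $3n$ to the trace, so everything reduces to showing that the subtracted sum contributes exactly $n$.

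Concretely, I would compute
\[
\sum_{i=1}^3 e_i^T L_A(e_i) = 3n - \sum_{j=1}^n \frac{1}{|u_{j-1},u_j,u_{j+1}|}\sum_{i=1}^3 (u_j)_i\, |u_{j-1}, e_i, u_{j+1}|.
\]
The inner sum over $i$ is the key identity: by multilinearity of the determinant in its middle argument, and since $u_j = \sum_i (u_j)_i\, e_i$, it collapses to $|u_{j-1}, u_j, u_{j+1}|$. Consequently each term in the $j$-sum equals $1$, and the total subtracted contribution is $n$, yielding $\trace(L_A) = 3n - n = 2n$.

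Equivalently, one can argue directly from the explicit formula \eqref{eqphi} for $\phi_{i,i}$: the numerator summed over $i$, with the sign convention built into the cyclic indexing of $a_{i-1,\cdot}$ and $a_{i+1,\cdot}$, is exactly the cofactor expansion of the denominator $|u_{k-1},u_k,u_{k+1}|$ along its middle column. Each ratio is therefore $1$, and summing over $k$ gives $n$, producing the same cancellation.

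The only subtle point is bookkeeping the cyclic indices $i \mapsto i\pm 1$ in $\mathbb{Z}/3$ and matching them to the signs of the cofactor expansion; this is mechanical but deserves careful verification in one of the three cases. Beyond that, the proof is a short computation, and no geometric input about the polygon or the pentagram map is needed.
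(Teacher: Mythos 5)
Your proof is correct and is essentially the paper's own argument: the paper likewise observes that the three $k$th summands of the diagonal entries add up to $1$ (i.e.\ the numerators assemble into the cofactor expansion of $|u_{k-1},u_k,u_{k+1}|$ along the middle column), giving $3n-n=2n$. Your multilinearity phrasing $\sum_i (u_j)_i\,|u_{j-1},e_i,u_{j+1}| = |u_{j-1},u_j,u_{j+1}|$ even sidesteps the sign bookkeeping you worry about at the end.
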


\begin{proof}
In the expression for $\trace(L_A) = \phi_{1,1} + \phi_{2,2} + \phi_{3,3}$ obtained using \eqref{eqphi}, the $k$th summands add up to $1$ for all $k$.  Hence 
\begin{displaymath}
\trace(L_A) = n+n+n - \sum_{k=1}^n 1 = 2n.
\end{displaymath}
\end{proof}

\begin{prop} Let $\psi \in GL_3(\mathbb{R})$, let $A$ be an $n$-gon, and let $B = \psi(A)$.  Then
\begin{displaymath}
L_B = \psi L_A \psi^{-1}.
\end{displaymath}
\end{prop}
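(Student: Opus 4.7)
The plan is to show $L_B \circ \psi = \psi \circ L_A$ as linear maps on $\mathbb{R}^3$, from which the conjugation formula $L_B = \psi L_A \psi^{-1}$ follows immediately. The calculation is essentially a multilinearity exercise, but one preliminary observation makes it clean, so I would do that first.

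The preliminary point is that the formula \eqref{eqL0} is invariant under rescaling any individual lift vector $u_k \mapsto c\, u_k$. Indeed, the only summand whose lifted vector $u_j$ itself changes is $j=k$, and there the factor $c$ in $u_k$ cancels the factor $c$ picked up by the denominator $|u_{k-1},u_k,u_{k+1}|$ (the numerator is unaffected); for $j=k\pm 1$ one gets $u_k$ in both the numerator and denominator determinants, so the ratio is unchanged. Consequently $L_A$ depends only on the projective classes $[u_i]$, so when computing $L_B$ for $B=\psi(A)$ I am free to use $\psi u_i$ as the lift of the $i$th vertex of $B$, without normalizing the third coordinate to $1$.

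Given this, I would simply plug $v := \psi w$ into the definition of $L_B$ and use the multilinearity of the determinant. For each $j$,
\begin{displaymath}
|\psi u_{j-1},\,\psi w,\,\psi u_{j+1}| = \det(\psi)\,|u_{j-1},w,u_{j+1}|,\qquad |\psi u_{j-1},\,\psi u_j,\,\psi u_{j+1}| = \det(\psi)\,|u_{j-1},u_j,u_{j+1}|,
\end{displaymath}
so the factor $\det(\psi)$ cancels in the ratio. Therefore
\begin{displaymath}
L_B(\psi w) = n\psi w - \sum_{j=1}^n \frac{|u_{j-1},w,u_{j+1}|}{|u_{j-1},u_j,u_{j+1}|}\,\psi u_j = \psi\!\left( nw - \sum_{j=1}^n \frac{|u_{j-1},w,u_{j+1}|}{|u_{j-1},u_j,u_{j+1}|}\,u_j\right) = \psi L_A(w),
\end{displaymath}
using linearity of $\psi$ in the middle step. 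Since $w$ is arbitrary, $L_B\psi = \psi L_A$, and $\psi$ is invertible, proving the claim.

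There is no genuine obstacle here; the only thing to be careful about is the choice of lift for the vertices of $B$, which the scale-invariance observation in the first paragraph handles once and for all. Everything else is the standard fact that conjugation by $\psi$ on $3\times 3$ matrices corresponds to acting by $\psi$ on the ambient $\mathbb{R}^3$, combined with the cancellation of $\det(\psi)$ between the numerator and denominator determinants in \eqref{eqL0}.
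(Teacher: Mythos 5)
Your proof is correct and follows essentially the same route as the paper: establish that \eqref{eqL0} is insensitive to the choice of lifts so that $\psi u_i$ may be used as lifts for $B$, then cancel $\det(\psi)$ in each ratio of determinants to get $L_B(\psi v)=\psi L_A(v)$. The only difference is that you spell out the scale-invariance of the lifts in more detail than the paper does.
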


\begin{proof}
Given $v \in \mathbb{R}^3$ we need to show
\begin{displaymath}
L_B(\psi v) = \psi L_A(v).
\end{displaymath}
Note that \eqref{eqL0} is invariant under arbitrary rescaling of the vectors $u_1,\ldots, u_n$.  Hence we can take any lifts $u_1,\ldots, u_n$ of the vertices of $A$ in the calculation of $L_A$ and the corresponding lifts $\psi u_1, \ldots, \psi u_n$ in the calculation of $L_B$.  So 
\begin{align*}
L_B(\psi v) &= n\psi v - \sum_{j=1}^n \frac{|\psi u_{j-1}, \psi v, \psi u_{j+1}|}{|\psi u_{j-1}, \psi u_j, \psi u_{j+1}|} \psi u_j \\
&= n\psi v - \sum_{j=1}^n \frac{|u_{j-1}, v, u_{j+1}|}{|u_{j-1}, u_j, u_{j+1}|} \psi u_j \\
&= \psi L_A(v)
\end{align*}
as desired.
\end{proof}

It follows that $L_A$ and $L_{\psi(A)}$ have the same eigenvalues leading to the following.

\begin{cor} \label{corProjInvs}
The characteristic polynomial of $L_{A}$ is invariant under projective transformations of $A$.
\end{cor}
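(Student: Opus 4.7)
The strategy is to reduce the corollary directly to the preceding proposition, which shows that $L_A$ transforms by conjugation under $\psi \in GL_3(\mathbb{R})$, together with the elementary fact that the characteristic polynomial is a similarity invariant. Almost no work remains beyond assembling these two ingredients.

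First I would recall that every projective transformation of $\mathbb{P}^2 \supseteq \mathbb{R}^2$ is induced, up to an overall nonzero scalar, by some $\psi \in GL_3(\mathbb{R})$ acting on the homogeneous lifts of the vertices of $A$. Writing $B = \psi(A)$ for the resulting $n$-gon, the preceding proposition gives $L_B = \psi L_A \psi^{-1}$, so $L_A$ and $L_B$ are similar as $3 \times 3$ matrices.

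Next I would invoke the standard identity
\begin{displaymath}
\det(\lambda I - \psi L_A \psi^{-1}) = \det\bigl(\psi (\lambda I - L_A) \psi^{-1}\bigr) = \det(\lambda I - L_A),
\end{displaymath}
which shows that the characteristic polynomials of $L_A$ and $L_B$ coincide. Since $\psi$ was an arbitrary lift of an arbitrary projective transformation, this proves the corollary.

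I do not expect any real obstacle, as the preceding proposition has already done the substantive work. The only minor point worth noting is that replacing $\psi$ by a scalar multiple $c\psi$ (corresponding to the same element of $PGL_3(\mathbb{R})$) leaves the conjugation $\psi L_A \psi^{-1}$ unchanged, so the statement really is about projective rather than merely linear transformations of $A$.
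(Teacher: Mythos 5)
Your proposal is correct and matches the paper's argument exactly: the paper deduces the corollary in one line from the preceding proposition $L_{\psi(A)} = \psi L_A \psi^{-1}$ together with the similarity invariance of the characteristic polynomial. Your added remark about scalar multiples of $\psi$ is a fine (if unnecessary) clarification.
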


\section{Conservation under the pentagram map} The purpose of this section is to prove the linear map $L_A$ is conserved by the pentagram map.  It is convenient to work with an abstract three-dimensional vector space $V$ and the corresponding projective plane $P = \mathbb{P}(V)$.  Let $A = (A_1,A_2,\ldots, A_n)$ be an $n$-tuple of points of $P$ in general position (no three points on a common line).  Choose arbitrarily nonzero lifts $u_1,\ldots, u_n \in V$ of $A_1,\ldots, A_n$.  Finally, define a function $L_A:V \to V$ by
\begin{equation} \label{eqL1}
L_A(v) = nv - \sum_{j=1}^n \frac{|u_{j-1}, v, u_{j+1}|}{|u_{j-1}, u_j, u_{j+1}|}u_j.
\end{equation}
Here and throughout, indices are taken modulo $n$.  The notation $|\cdot, \cdot, \cdot|$ refers to a determinantal form on $V$.  The choice of the form does not matter as a ratio of two determinants will always have the same value.  Two other easy observations are
\begin{enumerate}
\item $L_A$ is linear and
\item $L_A$ does not depend on the choice of the lifts $u_j$.
\end{enumerate}
To sum up, we have a rational map
\begin{align*}
P^n &\to End(V) \\
A &\mapsto L_A
\end{align*}

\begin{thm} \label{thmConserve}
For generic $A = (A_1,\ldots, A_n) \in P^n$
\begin{displaymath}
L_{T(A)} = L_A.
\end{displaymath}
\end{thm}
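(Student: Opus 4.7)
The plan is to verify $L_{T(A)}(v)=L_A(v)$ by a direct computation after making a convenient choice of lifts for $B := T(A)$. Fix any nonzero lifts $u_1,\ldots,u_n\in V$ of $A_1,\ldots,A_n$. Since $B_j$ lies on both of the lines $\overleftrightarrow{A_{j-1}A_{j+1}}$ and $\overleftrightarrow{A_jA_{j+2}}$, take the lift
\begin{displaymath}
w_j \;=\; u_{j-1}+\lambda_j u_{j+1}, \qquad \lambda_j \;=\; \frac{|u_{j-1},u_j,u_{j+2}|}{|u_j,u_{j+1},u_{j+2}|},
\end{displaymath}
where $\lambda_j$ is the unique scalar forced by $|u_j,w_j,u_{j+2}|=0$. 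Because \eqref{eqL1} is invariant under rescaling each lift individually, this choice is legitimate.

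A useful reformulation is $L_A(v)=\sum_j \rho_j^A(v)$, where $\rho_j^A$ is the projection onto $\mathrm{span}(u_{j-1},u_{j+1})$ along $\mathbb{R}u_j$: indeed, the $j$-th summand of \eqref{eqL1} is precisely the complementary projection $\pi_j^A(v)$, and $nv-\sum_j \pi_j^A(v)=\sum_j(v-\pi_j^A(v))=\sum_j \rho_j^A(v)$. So $L_{T(A)}=L_A$ really asserts equality of two sums of projections. A potentially useful geometric observation linking the two: because the pentagram correspondence places $B_{j-1}$ and $B_j$ both on the line $\overleftrightarrow{A_{j-1}A_{j+1}}$, one has $\mathrm{span}(u_{j-1},u_{j+1})=\mathrm{span}(w_{j-1},w_j)$ in $V$, and similarly $\mathrm{span}(u_j,u_{j+2})=\mathrm{span}(w_j,w_{j+1})$.

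Since both $L_A$ and $L_B$ are linear in $v$, it suffices to verify the identity on three independent test vectors, say $v=u_1,u_2,u_3$. For each such $v$ one substitutes $w_j=u_{j-1}+\lambda_j u_{j+1}$ throughout $L_B(v)$, expands the numerators $|w_{j-1},v,w_{j+1}|$ and denominators $|w_{j-1},w_j,w_{j+1}|$ by multilinearity as polynomials in the brackets $[a,b,c]:=|u_a,u_b,u_c|$, substitutes the $\lambda$ formulas, clears denominators, and compares with $L_A(v)$. The resulting identity should reduce to instances of the three-term Grassmann--Pl\"ucker relation. The principal obstacle is the combinatorial bulk of the expansion: many terms have to be tracked, and one must organize the cancellation so that contributions from indices $j=k-1,k,k+1$ collapse properly. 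I do not anticipate any essential new idea beyond multilinearity and Pl\"ucker relations, but careful bookkeeping will be required.
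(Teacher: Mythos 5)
Your setup is sound---the lifts $w_j=u_{j-1}+\lambda_j u_{j+1}$ with $\lambda_j=|u_{j-1},u_j,u_{j+2}|/|u_j,u_{j+1},u_{j+2}|$ are correct, the rewriting of $L_A$ as a sum of the projections $\rho_j^A$ onto $\mathrm{span}(u_{j-1},u_{j+1})$ along $\mathbb{R}u_j$ is valid, and reducing to three independent test vectors is legitimate by linearity---but the proof stops exactly where the theorem begins. The entire content of $L_{T(A)}=L_A$ is the cancellation you defer: after substituting $w_j$ into $|w_{j-1},v,w_{j+1}|/|w_{j-1},w_j,w_{j+1}|$ the numerator expands into four bracket monomials and the denominator into eight, each weighted by ratios $\lambda_{j-1},\lambda_j,\lambda_{j+1}$ that are themselves bracket ratios involving five consecutive vertices, and you never exhibit the Pl\"ucker cancellations or the collapse of contributions from $j=k-1,k,k+1$ that you predict. ``Should reduce to instances of the Grassmann--Pl\"ucker relation'' is a conjecture about a large rational identity, not a verification of it, so as written this is a plan rather than a proof.

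The paper sidesteps this computation entirely by factoring $T=\alpha_1\circ\alpha_2$ through the dual plane and proving (Propositions \ref{propalpha2} and \ref{propalpha1}) that each half-step dualizes the operator: $L_{\alpha_i(A)}=L_A^*$, whence $L_{T(A)}=(L_A^*)^*=L_A$. Your observation that $\mathrm{span}(w_{j-1},w_j)=\mathrm{span}(u_{j-1},u_{j+1})$ is precisely the geometric germ of that argument---the functional $f_j=|u_{j-1},\cdot,u_{j+1}|$ appearing in \eqref{eqL1} is a lift of the line $\join{A_{j-1}}{A_{j+1}}$, i.e.\ a vertex of $\alpha_2(A)$ and an edge of $T(A)$---but you do not exploit it. If you want to salvage the direct route, you must either carry out the bracket expansion in full (feasible but genuinely heavy; the identity is not a one-line Pl\"ucker consequence) or reorganize your projection decomposition so that the planes $\mathrm{span}(w_{j-1},w_{j+1})$ used by $L_{T(A)}$ are matched term by term with data of $A$, which in effect reconstructs the paper's duality argument.
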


To prove this result, it is easiest to break the pentagram map into two pieces $\alpha_1$ and $\alpha_2$ and consider each piece individually.  To this end, let $V^*$ denote the dual space of $V$ and $P^* = \mathbb{P}(V^*)$.  Given distinct points $A,B \in P$, there is a unique up to scaling, nonzero $f \in V^*$ that vanishes on both $A$ and $B$.  Let $\join{A}{B}$ denote the corresponding point in $P^*$, visualized as the line in $P$ containing $A$ and $B$. 

Define rational maps $\alpha_1,\alpha_2:P^n \to (P^*)^n$ by
\begin{displaymath}
\alpha_1(A) = (\join{A_1}{A_2}, \join{A_2}{A_3}, \join{A_3}{A_4}, \ldots, \join{A_n}{A_1})
\end{displaymath} 
and
\begin{displaymath}
\alpha_2(A) = (\join{A_n}{A_2}, \join{A_1}{A_3}, \join{A_2}{A_4}, \ldots, \join{A_{n-1}}{A_1}).
\end{displaymath}
There is the usual identification of $(V^*)^*$ with $V$ inducing an identification of $(P^*)^*$ with $P$.  Hence starting from $P^*$ we also get maps $\alpha_1,\alpha_2: (P^*)^n \to P^n$.  We then have that, up to reindexing vertices, $\alpha_1 \circ \alpha_1$ and $\alpha_2 \circ \alpha_2$ are the identity, $\alpha_1 \circ \alpha_2$ is the pentagram map, and $\alpha_2 \circ \alpha_1$ is its inverse.  This decomposition of the pentagram map as a product of two involutions is presented in \cite{S3} with greater attention paid to the indexing.

For a linear map $L:V \to W$ let $L^*$ be the dual map $L^*:W^* \to V^*$ defined by $(L^*(f))(v) = f(L(v))$ for all $v \in V$, $f \in W^*$.

\begin{prop} \label{propalpha2}
Let $A = (A_1,\ldots, A_n) \in P^n$.  Then
\begin{displaymath}
L_{\alpha_2(A)} = L_A^*.
\end{displaymath}
\end{prop}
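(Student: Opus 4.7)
The plan is to apply both $L_{\alpha_2(A)}$ and $L_A^*$ to an arbitrary $g \in V^*$ and show they agree as linear functionals on $V$. First I will choose compatible lifts: given lifts $u_j$ of the $A_j$, the natural lift of $\langle A_{j-1}, A_{j+1} \rangle \in P^*$ is the functional $f_j \in V^*$ defined by $f_j(v) = |u_{j-1}, v, u_{j+1}|$, which manifestly vanishes on $u_{j-1}$ and $u_{j+1}$.

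Next I will compute both sides explicitly. For $L_A^*(g)$, apply the definition $(L_A^*(g))(v) = g(L_A(v))$ and use \eqref{eqL1}, pulling $g$ through the linear combination; after recognizing $|u_{j-1}, v, u_{j+1}|$ as $f_j(v)$, this gives
\begin{displaymath}
L_A^*(g) = ng - \sum_{j=1}^n \frac{g(u_j)}{|u_{j-1}, u_j, u_{j+1}|}\, f_j.
\end{displaymath}
On the other side, applying \eqref{eqL1} to the polygon $\alpha_2(A)$ with the lifts $f_j$ and any fixed determinantal form $|\cdot,\cdot,\cdot|^*$ on $V^*$ yields
\begin{displaymath}
L_{\alpha_2(A)}(g) = ng - \sum_{j=1}^n \frac{|f_{j-1}, g, f_{j+1}|^*}{|f_{j-1}, f_j, f_{j+1}|^*}\, f_j.
\end{displaymath}
So the proposition reduces to the scalar identity
\begin{displaymath}
\frac{|f_{j-1}, g, f_{j+1}|^*}{|f_{j-1}, f_j, f_{j+1}|^*} = \frac{g(u_j)}{|u_{j-1}, u_j, u_{j+1}|}
\end{displaymath}
for each $j$, which I expect to be the heart of the argument.

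To establish this identity I will exploit the annihilator of $u_j$. Since $f_{j-1}(v) = |u_{j-2}, v, u_j|$ and $f_{j+1}(v) = |u_j, v, u_{j+2}|$ both vanish on $u_j$, they sit inside the two-dimensional subspace $\mathrm{Ann}(u_j) \subset V^*$, and general position (specifically, that $A_{j-2}, A_j, A_{j+2}$ are not collinear) makes them linearly independent, hence a basis of $\mathrm{Ann}(u_j)$. Meanwhile $f_j(u_j) = |u_{j-1}, u_j, u_{j+1}| \neq 0$, so $f_j \notin \mathrm{Ann}(u_j)$, and $\{f_{j-1}, f_j, f_{j+1}\}$ is a basis of $V^*$. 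Decomposing $g = \alpha f_j + h$ uniquely with $h \in \mathrm{Ann}(u_j)$, the determinant $|f_{j-1}, g, f_{j+1}|^*$ collapses to $\alpha |f_{j-1}, f_j, f_{j+1}|^*$ because the $h$-contribution is a triple determinant of three vectors lying in a two-plane. So the left side equals $\alpha$; the right side likewise equals $\alpha$ since $g(u_j) = \alpha f_j(u_j) + h(u_j) = \alpha\,|u_{j-1}, u_j, u_{j+1}|$. The main obstacle is setting up this annihilator decomposition cleanly, but once the lifts $f_j$ are chosen as above, the rest is a one-line calculation that bypasses any explicit manipulation of dual volume forms.
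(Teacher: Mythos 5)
Your proof is correct and follows essentially the same route as the paper: both use the lifts $f_j = |u_{j-1},\cdot,u_{j+1}|$ and reduce the claim to the identity $|f_{j-1},g,f_{j+1}|/|f_{j-1},f_j,f_{j+1}| = g(u_j)/f_j(u_j)$, which the paper gets by observing that $|f_{j-1},\cdot,f_{j+1}|$ and evaluation at $u_j$ are two functionals on $V^*$ vanishing on $f_{j-1}$ and $f_{j+1}$ and hence proportional. Your annihilator-of-$u_j$ decomposition $g = \alpha f_j + h$ is just a more explicit rendering of that same proportionality argument, so no substantive difference.
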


\begin{proof}
Let $u_1,\ldots, u_n$ be lifts of $A_1,\ldots, A_n$ as before.  Then
\begin{align*}
L_A(v) &= nv - \sum_{j=1}^n \frac{|u_{j-1}, v, u_{j+1}|}{|u_{j-1}, u_j, u_{j+1}|}u_j \\
&= nv - \sum_{j=1}^n \frac{f_j(v)}{f_j(u_j)} u_j
\end{align*}
where $f_j = |u_{j-1}, \cdot, u_{j+1}| \in V^*$.  Now $f_j$ vanishes on both $A_{j-1}$ and $A_{j+1}$ so it is a lift of $\join{A_{j-1}}{A_{j+1}}$.  Hence
\begin{displaymath}
L_{\alpha_2(A)}(g) = ng - \sum_{j=1}^n \frac{|f_{j-1}, g, f_{j+1}|}{|f_{j-1}, f_j, f_{j+1}|}f_j
\end{displaymath}
for $g \in V^*$.  Note that $|f_{j-1}, \cdot, f_{j+1}|$ and ``evaluation at $u_j$'' are both functionals on $V^*$ that vanish at $f_{j-1}$ and $f_{j+1}$.  Hence they are scalar multiples of each other.  It follows that
\begin{displaymath}
L_{\alpha_2(A)}(g) = ng - \sum_{j=1}^n \frac{g(u_j)}{f_j(u_j)}f_j.
\end{displaymath}
Therefore
\begin{displaymath}
(L_{\alpha_2(A)}(g))(v) = ng(v) - \sum_{j=1}^n \frac{g(u_j)f_j(v)}{f_j(u_j)} = g(L_A(v))
\end{displaymath}
for all $v \in V$ and $g \in V^*$ as desired.
\end{proof}

There is an alternate formula for $L_A$ which is better suited for dealing with $\alpha_1$.  By Cramer's rule
\begin{displaymath}
\frac{|v, u_j, u_{j+1}|u_{j-1} + |u_{j-1}, v, u_{j+1}|u_j + |u_{j-1},u_j,v|u_{j+1}}{|u_{j-1}, u_j, u_{j+1}|} = v.
\end{displaymath}
Therefore
%\begin{align*} 
%L_A(v) &= \sum_{j=1}^n \left(v - \frac{|u_{j-1}, v, u_{j+1}|}{|u_{j-1}, u_j, u_{j+1}|}u_j\right) \\
%&= \sum_{j=1}^n \left(\frac{|v, u_j, u_{j+1}|}{|u_{j-1}, u_j, u_{j+1}|}u_{j-1} + \frac{|u_{j-1},u_j,v|}{|u_{j-1}, u_j, u_{j+1}|}u_{j+1}\right)
%\end{align*}
\begin{equation} \label{eqL2}
L_A(v) = \sum_{j=1}^n \left(\frac{|v, u_j, u_{j+1}|}{|u_{j-1}, u_j, u_{j+1}|}u_{j-1} + \frac{|u_{j-1},u_j,v|}{|u_{j-1}, u_j, u_{j+1}|}u_{j+1}\right)
\end{equation}

\begin{prop} \label{propalpha1}
Let $A = (A_1,\ldots, A_n) \in P^n$.  Then
\begin{displaymath}
L_{\alpha_1(A)} = L_A^*.
\end{displaymath}
\end{prop}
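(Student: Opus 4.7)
The plan is to mimic the structure of Proposition \ref{propalpha2}, but to use the alternate expression \eqref{eqL2} in place of \eqref{eqL1}. The motivation for this switch is that the natural lift of $\langle A_j, A_{j+1}\rangle = \alpha_1(A)_j$ is $g_j = |u_j, u_{j+1}, \cdot|\in V^*$, which is built from two \emph{consecutive} $u$'s; this matches the adjacent-index pairing inside the two sums of \eqref{eqL2}, whereas the skip-one pairing of \eqref{eqL1} lined up with the $\alpha_2$ lifts $f_j = |u_{j-1}, \cdot, u_{j+1}|$.

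The key technical step is an analog of the proportionality argument used in Proposition \ref{propalpha2}, based on a common-zero observation. Since $g_{k+1} = |u_{k+1}, u_{k+2}, \cdot|$ and $g_{k+2} = |u_{k+2}, u_{k+3}, \cdot|$ both vanish on $u_{k+2}$, their span in $V^*$ is exactly the two-dimensional subspace of functionals annihilated by $u_{k+2}$. Hence both $h \mapsto |h, g_{k+1}, g_{k+2}|_{V^*}$ and $h \mapsto h(u_{k+2})$ are linear functionals on $V^*$ vanishing on this plane, so they must be proportional. Evaluating at $h = g_k$ identifies the constant and gives
\begin{displaymath}
\frac{|h, g_{k+1}, g_{k+2}|_{V^*}}{|g_k, g_{k+1}, g_{k+2}|_{V^*}} = \frac{h(u_{k+2})}{|u_k, u_{k+1}, u_{k+2}|},
\end{displaymath}
and symmetrically $|g_{k-2}, g_{k-1}, h|/|g_{k-2}, g_{k-1}, g_k| = h(u_{k-1})/|u_{k-1}, u_k, u_{k+1}|$ (this time using $u_{k-1}$ as the common zero of $g_{k-2}$ and $g_{k-1}$).

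With these identities in hand, apply \eqref{eqL2} to $L_{\alpha_1(A)}$ with the $g_j$'s playing the role of the $u_j$'s; reindexing both of the resulting sums so that the outer vector is $g_k$ shows that the coefficient of $g_k$ is the sum of the two ratios above. For the other side, apply $h$ to \eqref{eqL2} for $L_A(v)$ and use $|v, u_j, u_{j+1}| = g_j(v)$ together with $|u_{j-1}, u_j, v| = g_{j-1}(v)$ to rewrite $L_A^*(h) = h\circ L_A$ as $\sum_k c_k(h)\, g_k$. After a matching reindexing, $c_k(h)$ is exactly $h(u_{k-1})/|u_{k-1}, u_k, u_{k+1}| + h(u_{k+2})/|u_k, u_{k+1}, u_{k+2}|$, so the two expressions agree term by term.

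The main obstacle is bookkeeping: the two sums in \eqref{eqL2} each contribute to $g_k$ under different index shifts, and one has to verify that these shifts, together with the cyclic common-zero pattern, align correctly. The conceptual step --- recognizing that $g_{k+1}, g_{k+2}$ share a zero in $V$ just as $f_{j-1}, f_{j+1}$ shared a common annihilator in the proof of Proposition \ref{propalpha2} --- is the heart of the argument; once it is in place, the rest is a careful rearrangement.
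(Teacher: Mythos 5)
Your proposal is correct and follows essentially the same route as the paper: the same lifts $|u_j,u_{j+1},\cdot|$ of $\join{A_j}{A_{j+1}}$, the same use of formula \eqref{eqL2}, the same common-zero proportionality argument dualizing the one from Proposition \ref{propalpha2}, and the same reindexing to match coefficients of each $f_k$. The only cosmetic difference is that you normalize the proportionality constants by evaluating at $g_k$ explicitly, where the paper writes the denominators as $f_{j\pm1}(u_j)$; these agree since cyclic permutations leave the determinant unchanged.
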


\begin{proof}
Let $f_j = |\cdot, u_j, u_{j+1}| = |u_j, u_{j+1}, \cdot| \in V^*$.  Then
\begin{displaymath}
L_A(v) = \sum_{j=1}^n \left(\frac{f_j(v)}{f_j(u_{j-1})}u_{j-1} + \frac{f_{j-1}(v)}{f_{j-1}(u_{j+1})}u_{j+1}\right).
\end{displaymath}
On the other hand, $f_j$ is a lift of $\join{A_j}{A_{j+1}}$ for all $j$ so
\begin{align*}
L_{\alpha_1(A)}(g) &= \sum_{j=1}^n \left(\frac{|g, f_j, f_{j+1}|}{|f_{j-1},f_j,f_{j+1}|}f_{j-1} + \frac{|f_{j-1},f_j,g|}{|f_{j-1},f_j,f_{j+1}|}f_{j+1}\right) \\
&= \sum_{j=1}^n \left(\frac{g(u_{j+1})}{f_{j-1}(u_{j+1})}f_{j-1} + \frac{g(u_j)}{f_{j+1}(u_j)}f_{j+1}\right)
\end{align*}
by similar reasoning as in the proof of Proposition \ref{propalpha2}.  Putting everything together
\begin{align*}
g(L_A(v)) &= \sum_{j=1}^n \left(\frac{f_j(v)g(u_{j-1})}{f_j(u_{j-1})} + \frac{f_{j-1}(v)g(u_{j+1})}{f_{j-1}(u_{j+1})}\right) \\
&= \sum_{j=1}^n \left(\frac{f_{j-1}(v)g(u_{j+1})}{f_{j-1}(u_{j+1})} + \frac{f_{j+1}(v)g(u_j)}{f_{j+1}(u_j)}\right) \\
&= (L_{\alpha_1(A)}(g))(v)
\end{align*}
\end{proof}

Combining Propositions \ref{propalpha2} and \ref{propalpha1} yields
\begin{displaymath}
L_{T(A)} = L_{\alpha_1(\alpha_2(A))} = (L_A^*)^* = L_A
\end{displaymath}
completing the proof of Theorem \ref{thmConserve}.

\section{Proof of main theorem}
In this section we prove Theorem \ref{thmMain}.  Let $A$ be a convex $n$-gon.  The main idea is to show that the limit point of $A$ corresponds to an eigenvector of $L_A$, which follows easily from Theorem \ref{thmConserve} together with the following.

\begin{prop} \label{propLShrinks}
If $Q \in \conv(A)$ then $L_A(Q) \in \conv(A)$.
\end{prop}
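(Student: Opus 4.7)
The plan is to exhibit $L_A(v)$ explicitly as a nonnegative combination of the vertex lifts $u_1, \ldots, u_n$; after affine normalization this will place $L_A(Q)$ in $\conv(A)$. The original formula \eqref{eqL1} is poorly suited to this because the leading term $nv$ does not sit manifestly in the positive cone over $u_1, \ldots, u_n$, so I would instead work from the Cramer's rule form \eqref{eqL2}, which already displays $L_A(v)$ as a sum of terms each proportional to some $u_j$.

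First, I would collect the coefficient of each $u_i$ in \eqref{eqL2}: the vertex $u_i$ appears as $u_{j-1}$ when $j=i+1$ and as $u_{j+1}$ when $j=i-1$. This gives
\[
L_A(v) = \sum_{i=1}^n \mu_i(v)\, u_i, \qquad \mu_i(v) = \frac{|v, u_{i+1}, u_{i+2}|}{|u_i, u_{i+1}, u_{i+2}|} + \frac{|u_{i-2}, u_{i-1}, v|}{|u_{i-2}, u_{i-1}, u_i|}.
\]
The key structural feature, to be used in the next step, is that every determinant above involves a pair of consecutive vertices of $A$, i.e., an edge.

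Next, using the standard lifts $u_j = [x_j,y_j,1]^T$ and $v=[X,Y,1]^T$ and assuming $A$ is ordered counterclockwise, each $|u_a,u_b,u_c|$ becomes twice the signed area of the oriented triangle $A_aA_bA_c$, positive precisely when the triple is counterclockwise. Convexity of $A$ then makes every denominator strictly positive. Because $\conv(A)$ is the intersection of the closed half-planes cut out by the edges of $A$, the point $Q$ lies on the interior side of every edge line, and using the cyclic invariance of the $3 \times 3$ determinant this translates directly into $|v,u_{i+1},u_{i+2}| \geq 0$ and $|u_{i-2},u_{i-1},v| \geq 0$ for all $i$. Hence every $\mu_i(v) \geq 0$.

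To finish, I would observe that the third coordinate of $L_A(v)$ equals $\sum_i \mu_i(v)$ (since every $u_i$ and $v$ has third coordinate $1$), and that this sum is strictly positive because $Q$ cannot lie on all $n$ edge lines simultaneously. The affine point represented by $L_A(v)$ is then the genuine convex combination $\sum_i \mu_i(v) A_i / \sum_j \mu_j(v)$ and therefore lies in $\conv(A)$. The main obstacle I anticipate is the opening step: recognizing that \eqref{eqL2} rather than \eqref{eqL1} is the correct starting point, and reindexing it so that every determinant pairs $v$ with an edge of $A$. Once that reformulation is in place, the positivity is immediate from the half-plane characterization of convexity, and no further calculation is needed.
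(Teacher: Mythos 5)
Your proof is correct and follows essentially the same route as the paper: reindex \eqref{eqL2} to collect the coefficient of each $u_j$, observe that convexity of $A$ and $Q \in \conv(A)$ make all the area ratios nonnegative, and normalize by the third coordinate to obtain a convex combination of the vertices. The one small divergence is at the normalization step: you argue only that the total $\sum_i \mu_i(v)$ is positive, whereas the paper shows each individual coefficient of $u_j$ is strictly positive (because $\overleftrightarrow{A_{j-2}A_{j-1}}$ and $\overleftrightarrow{A_{j+1}A_{j+2}}$ do not meet inside $\conv(A)$), a stronger conclusion that suffices equally for this proposition but is invoked later in the proof of Proposition \ref{propEigenvector} to conclude that $L_A(Q)$ lies in the \emph{interior} of $A$.
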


\begin{proof}
Let $v,u_1,\ldots, u_n \in \mathbb{R}^3$ be lifts of $Q, A_1,\ldots, A_n$ respectively, choosing all lifts on the hyperplane $z=1$.  Then by \eqref{eqL2}
\begin{displaymath}
L_A(v) = \sum_{j=1}^n \left(\frac{|u_{j-2},u_{j-1},v|}{|u_{j-2}, u_{j-1}, u_j|} + \frac{|v, u_{j+1}, u_{j+2}|}{|u_j, u_{j+1}, u_{j+2}|}\right)u_j.
\end{displaymath}
Geometrically, the coefficient 
\begin{displaymath}
\frac{|u_{j-2},u_{j-1},v|}{|u_{j-2}, u_{j-1}, u_j|}
\end{displaymath}
equals the ratio of the areas of $\triangle A_{j-2}A_{j-1}Q$ and $\triangle A_{j-2}A_{j-1}A_j$.  Since $Q \in \conv(A)$, $Q$ lies (weakly) on the same side of $\overleftrightarrow{A_{j-2}A_{j-1}}$ as $A_j$ does so the ratio is taken with a positive sign.  For similar reasons
\begin{displaymath}
\frac{|v, u_{j+1}, u_{j+2}|}{|u_j, u_{j+1}, u_{j+2}|} \geq 0.
\end{displaymath}
The total coefficient of $u_j$ is strictly positive since $\overleftrightarrow{A_{j-2}A_{j-1}}$ and $\overleftrightarrow{A_{j+1}A_{j+2}}$ do not intersect in $\conv(A)$.  Therefore $L_A(v)$ is a positive linear combination of $u_1,u_2,\ldots,u_n$.  Scaling down to $z=1$ yields that $L_A(Q)$ is a convex combination of $A_1,\ldots, A_n$.  
\end{proof}

\begin{prop} \label{propEigenvector}
Let $A$ be a convex polygon and $(X,Y) = \lim_{k \to \infty}T^k(A)$.  Then $[X\ Y\ 1]^T$ is an eigenvector of $L_A$, and the associated eigenspace is one-dimensional.
\end{prop}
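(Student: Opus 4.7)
The plan is to combine the conservation of $L_A$ under the pentagram map (Theorem \ref{thmConserve}) with the invariance of the convex hull (Proposition \ref{propLShrinks}) applied to every iterate $T^k(A)$, and then to upgrade Proposition \ref{propLShrinks} slightly to deduce one-dimensionality.

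First I would show that $[X\ Y\ 1]^T$ is an eigenvector. Let $K_k = \conv(T^k(A))$. Since the pentagram map sends a convex polygon into the interior of its convex hull, $K_0 \supseteq K_1 \supseteq K_2 \supseteq \cdots$, and by Schwartz's theorem the diameters of the $K_k$ tend to $0$, so $\bigcap_k K_k = \{P\}$ where $P = (X,Y)$. In particular $P \in K_k$ for every $k$. Iterating Theorem \ref{thmConserve} gives $L_A = L_{T^k(A)}$, and applying Proposition \ref{propLShrinks} to the convex polygon $T^k(A)$ then shows $L_A(P) \in K_k$ for every $k$. Letting $k \to \infty$ forces $L_A(P) = P$, which, read on the affine chart $z = 1$, says that $L_A [X\ Y\ 1]^T = \lambda [X\ Y\ 1]^T$ for some $\lambda \in \mathbb{R}$.

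For the one-dimensionality, I would argue by contradiction: suppose the $\lambda$-eigenspace of $L_A$ has dimension at least $2$. Projectively this means $L_A$ fixes pointwise a whole line $\ell \subset \mathbb{P}^2$ passing through $P$. Since $P \in K_1 \subseteq \textrm{int}(\conv(A))$, the affine part of $\ell$ meets the boundary $\partial \conv(A)$ in a point $Q$. But the proof of Proposition \ref{propLShrinks} in fact establishes more than its statement: the total coefficient of each $u_j$ in the expansion of $L_A(v)$ is \emph{strictly} positive whenever $v$ is a lift of a point of $\conv(A)$, which means $L_A(\conv(A)) \subseteq \textrm{int}(\conv(A))$. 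Thus $L_A(Q) \in \textrm{int}(\conv(A))$, contradicting $L_A(Q) = Q \in \partial \conv(A)$, so the eigenspace must be one-dimensional.

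The main point to be careful about is the affine-versus-projective bookkeeping: $L_A$ is genuinely a linear map on $\mathbb{R}^3$, but every invariance statement we use lives on the affine chart $z=1$, and the conclusion about eigenvectors follows only once we identify the projective fixed point $P$ with its distinguished lift $[X\ Y\ 1]^T$. The single ingredient not already packaged as a lemma is the sharpening of Proposition \ref{propLShrinks} to $L_A(\conv(A)) \subseteq \textrm{int}(\conv(A))$; this is immediate from its existing proof but is the driver of the one-dimensionality step, and worth flagging explicitly.
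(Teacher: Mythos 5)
Your proposal is correct and follows essentially the same route as the paper: the eigenvector claim via $L_A = L_{T^k(A)}$ and $L_A(P) \in \conv(T^k(A))$ for all $k$, and one-dimensionality by intersecting the projective line of fixed points with $\partial\,\conv(A)$ and invoking the strict positivity of the coefficients in the proof of Proposition \ref{propLShrinks}. The sharpening you flag --- that $L_A$ maps $\conv(A)$ into its interior --- is precisely the point the paper's own proof leans on in the same place.
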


\begin{proof}
By Proposition \ref{propLShrinks}, $L_A$ restricts to a continuous map from $\conv(A)$ to itself.  For each $k \geq 0$ we have $(X,Y) \in \conv(T^k(A))$ so
\begin{align*}
L_A(X,Y) &= L_{T^k(A)}(X,Y) \quad \textrm{(by Theorem \ref{thmConserve})} \\
&\in \conv(T^k(A)) \quad \textrm{(by Proposition \ref{propLShrinks})}
\end{align*}
It follows that $L_A(X,Y)$ equals the limit point $(X,Y)$.  Lifting to $\mathbb{R}^3$, $[X\ Y\ 1]^T$ must be an eigenvector of $L_A$.

Suppose for the sake of contradiction that $[X\ Y\ 1]^T$ is part of a larger dimensional eigenspace.  Projecting to the plane $z=1$ gives a line containing $(X,Y)$ with all its points fixed by $L_A$.  This line must intersect a side or vertex of $A$, say at the point $Q$.  Then $L_A(Q) = Q$ which is a contradiction as the proof of Proposition \ref{propLShrinks} shows that $L_A(Q)$ lies in the interior of $A$.
\end{proof}

The problem of determining the limit point is now reduced to linear algebra.

\begin{proof}[Proof of Theorem \ref{thmMain}]
Let $A$ be a convex polygon with vertices $(x_1,y_1),\ldots, (x_n,y_n)$.  Then $L_A$ is a $3$-by-$3$ matrix whose entries are rational functions of the $x_j$ and $y_j$.  By Proposition \ref{propEigenvector}, there is an eigenvalue $\lambda$ of $L_A$ with geometric multiplicity $1$ for which $[X\ Y\ 1]^T$ is an eigenvector.  Hence $(X,Y)$ is the unique solution to a linear system
\begin{displaymath}
L_A\left(\left[ \begin{array}{c} X \\ Y \\ 1 \\ \end{array} \right]\right)
= \left[ \begin{array}{c} \lambda X \\ \lambda Y \\ \lambda \\ \end{array} \right].
\end{displaymath}
Row reduction produces the solution with $X,Y \in \mathbb{Q}(x_1,y_1,\ldots, x_n,y_n, \lambda)$.
\end{proof}

\section{Axis-aligned polygons}
We now apply the results of the previous sections to the special case of axis aligned polygons.  Let $A$ be a $2m$-gon with vertices
\begin{displaymath}
(x_1,y_{2m}), (x_1,y_2), (x_3,y_2), (x_3, y_4), (x_5, y_4), \ldots, (x_{2m-1}, y_{2m}).
\end{displaymath}

\begin{prop} \label{propAxisAligned}
For $A$ as above, $L_A: \mathbb{R}^3 \to \mathbb{R}^3$ is given in matrix form by
\begin{displaymath}
L_A = \left[\begin{array}{ccc}
m & 0 & x_1 + x_3 + \ldots + x_{2m-1} \\
0 & m & y_2 + y_4 + \ldots + y_{2m} \\
0 & 0 & 2m \end{array} \right].
\end{displaymath}
\end{prop}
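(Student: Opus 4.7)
The approach is to apply formula \eqref{eqL0} to each standard basis vector $e_1, e_2, e_3$ in turn, obtaining the three columns of $L_A$. The key simplification is that three consecutive vertices $A_{j-1}, A_j, A_{j+1}$ of an axis-aligned polygon form a right triangle with legs parallel to the coordinate axes meeting at $A_j$. Consequently the denominator $|u_{j-1}, u_j, u_{j+1}|$ factors as $\pm(x^+ - x^-)(y^+ - y^-)$, where $x^\pm$ and $y^\pm$ denote the two distinct $x$- and $y$-coordinates among the three vertices; the overall sign depends on the parity of $j$. It is therefore natural to split the sum in \eqref{eqL0} into an odd sub-sum over $j = 2\ell-1$ and an even sub-sum over $j = 2\ell$, each indexed by $\ell = 1, \ldots, m$.

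For each choice of $v$, expand the numerator $|u_{j-1}, v, u_{j+1}|$ and apply the elementary identity
\[
\frac{ab' - a'b}{(a'-a)(b'-b)} \;=\; \frac{a}{a'-a} - \frac{b}{b'-b}
\]
to split the coefficient of $u_j$ in \eqref{eqL0} into an ``$x$-piece'' (involving only the $x^\pm$) and a ``$y$-piece'' (involving only the $y^\pm$). This is the step that makes everything tractable.

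Now combine the two sub-sums. Considerable cancellation occurs: when computing the first and third columns of $L_A$ the $y$-pieces of the odd and even sub-sums cancel in matching pairs, and symmetrically the $x$-pieces cancel when computing the second column. The surviving $x$-sums (respectively $y$-sums) telescope after a single cyclic index shift $\ell \mapsto \ell \pm 1$ modulo $m$, collapsing to the diagonal values $m$, $m$ in the first two columns and to $x_1 + x_3 + \cdots + x_{2m-1}$ and $y_2 + y_4 + \cdots + y_{2m}$ in the third. The diagonal entry $2m$ in the third row is in any case forced by $\trace(L_A) = 4m$ (Proposition \ref{propTrace}).

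The main obstacle is bookkeeping rather than conceptual: tracking signs in the denominators, matching the odd and even sub-sums via the correct cyclic shift of $\ell$, and verifying that the boundary terms in each telescoping sum close up correctly modulo $2m$. No real difficulty is anticipated beyond careful index management.
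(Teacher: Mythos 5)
Your proposal is correct in substance and would yield the proposition, but it organizes the computation differently from the paper. You start from \eqref{eqL0}, where the numerator $|u_{j-1},v,u_{j+1}|$ does \emph{not} factor (the vertices $A_{j-1}$ and $A_{j+1}$ are opposite corners of an axis-aligned rectangle, sharing neither coordinate), so you need the partial-fraction identity $\frac{ab'-a'b}{(a'-a)(b'-b)}=\frac{a}{a'-a}-\frac{b}{b'-b}$ to split each vertex's coefficient into an $x$-piece and a $y$-piece before pairing terms. The paper instead starts from the Cramer's-rule formula \eqref{eqL2} and regroups the sum by \emph{edges} $\{u_j,u_{j+1}\}$; since the two endpoints of an edge share a coordinate, every determinant appearing there factors on its own (e.g.\ $|u_{2i-1},u_{2i},v|=(y_{2i}-y_{2i-2})(x_{2i-1}z-x)$), and each edge's two terms collapse directly to $y e_2 + z[x_{2i-1}\ 0\ 1]^T$ or $x e_1 + z[0\ y_{2i}\ 1]^T$ with no telescoping or global index shift needed. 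Your pairing of the $x$-pieces of $u_{2i}$ and $u_{2i+1}$ (and of the $y$-pieces after the cyclic shift) is in effect a rediscovery of that edge grouping, so the two arguments are close cousins; the paper's version just front-loads the work into the choice of formula. One caution: your claim that the $y$-pieces ``cancel'' when computing the third column is not quite right --- for $v=e_3$ they combine into $[0,\ -(y_2+\cdots+y_{2m}),\ 0]^T$ rather than vanishing, and it is exactly this survivor that produces the $(2,3)$ entry of the matrix; only the first and third coordinates of that combination are zero. The trace shortcut for the $(3,3)$ entry is fine but unnecessary, since you must compute the full third row anyway to see that its off-diagonal entries vanish.
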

\begin{proof}
Lift the vertices of $A$ to $\mathbb{R}^3$ as 
\begin{displaymath}
u_{2i} = \left[\begin{array}{c} x_{2i-1} \\ y_{2i} \\ 1 \end{array}\right] \quad
u_{2i+1} = \left[\begin{array}{c} x_{2i+1} \\ y_{2i} \\ 1 \end{array}\right].
\end{displaymath}
If $v = [x \ y \ z]^T$ then by direct calculations
\begin{align*}
|u_{2i-1}, u_{2i}, v| &= (y_{2i}-y_{2i-2})(x_{2i-1}z-x) \\
|u_{2i-1}, u_{2i}, u_{2i+1}| &= (y_{2i}-y_{2i-2})(x_{2i-1}-x_{2i+1}) \\
|u_{2i}, u_{2i+1}, v| &= (x_{2i-1}-x_{2i+1})(y_{2i}z-y) \\
|u_{2i}, u_{2i+1}, u_{2i+2}| &= (x_{2i-1}-x_{2i+1})(y_{2i}-y_{2i+2})
\end{align*}

Reorganizing \eqref{eqL2} yields
\begin{displaymath}
L_A(v) = \sum_{j=1}^{2m} \left(\frac{|v, u_{j+1}, u_{j+2}|}{|u_j, u_{j+1}, u_{j+2}|}u_j + \frac{|u_{j-1},u_j,v|}{|u_{j-1}, u_j, u_{j+1}|}u_{j+1}\right).
\end{displaymath}
If $j=2i-1$ then
\begin{align*}
&\frac{|v, u_{j+1}, u_{j+2}|}{|u_j, u_{j+1}, u_{j+2}|}u_j + \frac{|u_{j-1},u_j,v|}{|u_{j-1}, u_j, u_{j+1}|}u_{j+1} \\
&= \frac{y_{2i}z-y}{y_{2i}-y_{2i-2}} \left[ \begin{array}{c} x_{2i-1} \\ y_{2i-2} \\ 1 \end{array}\right]
+ \frac{y_{2i-2}z-y}{y_{2i-2}-y_{2i}} \left[ \begin{array}{c} x_{2i-1} \\ y_{2i} \\ 1 \end{array}\right] \\
&= y\left[ \begin{array}{c} 0 \\ 1 \\ 0 \end{array} \right] + z\left[ \begin{array}{c} x_{2i-1} \\ 0 \\ 1 \end{array} \right]
\end{align*}
while if $j=2i$ then
\begin{align*}
&\frac{|v, u_{j+1}, u_{j+2}|}{|u_j, u_{j+1}, u_{j+2}|}u_j + \frac{|u_{j-1},u_j,v|}{|u_{j-1}, u_j, u_{j+1}|}u_{j+1} \\
&= \frac{x_{2i+1}z-x}{x_{2i+1}-x_{2i-1}} \left[ \begin{array}{c} x_{2i-1} \\ y_{2i} \\ 1 \end{array}\right]
+ \frac{x_{2i-1}z-x}{x_{2i-1}-x_{2i+1}} \left[ \begin{array}{c} x_{2i+1} \\ y_{2i} \\ 1 \end{array}\right] \\
&= x\left[ \begin{array}{c} 1 \\ 0 \\ 0 \end{array} \right] + z\left[ \begin{array}{c} 0 \\ y_{2i} \\ 1 \end{array} \right]
\end{align*}
Summing over all $j$,
\begin{displaymath}
L_A(v) = \left[\begin{array}{c}
mx + (x_1 + x_3 + \ldots + x_{2m-1})z \\
my + (y_2 + y_4 + \ldots + y_{2m})z \\
2mz \end{array}\right]
\end{displaymath}
as desired.
\end{proof}

As demonstrated by Schwartz \cite{S3}, iteration of the pentagram map on an axis-aligned polygon can be modeled by Dodgson's condensation method of computing determinants.  An application of this idea is a remarkable incidence theorem \cite[Theorem 1.3]{S3} that if $A$ is an axis-aligned $2m$-gon then $T^{m-2}(A)$ has its odd vertices lying on one line and its even vertices on another.  Equally remarkably, Yao \cite[Theorem 1.3]{Y} demonstrated that the intersection point of these two lines, termed the point of collapse, is the center of mass of the vertices of the original polygon $A$.  We now have a new proof of Yao's theorem.

\begin{thm}[Yao]
Let $A$ be an axis-aligned $2m$-gon with vertices as before and let $B = T^{m-2}(A)$.  Let $l_1$ be the line containing $B_1,B_3,\ldots, B_{2m-1}$ and $l_2$ the line containing $B_2,B_4,\ldots, B_{2m}$.  Then $l_1$ and $l_2$ intersect at the point
\begin{displaymath}
\left(\frac{x_1+x_3 +\ldots + x_{2m-1}}{m}, \frac{y_2 + y_4 + \ldots + y_{2m}}{m}\right).
\end{displaymath}
\end{thm}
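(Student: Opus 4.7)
The plan is to identify the point of collapse $P = l_1 \cap l_2$ as a projective fixed point of $L_B$, where $B = T^{m-2}(A)$, and then use the conservation law $L_B = L_A$ together with the explicit matrix from Proposition \ref{propAxisAligned} to read off the coordinates of $P$.

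The key observation is a collinearity fact about $B$. By Schwartz's incidence theorem we have $B_j \in l_1$ for odd $j$ and $B_j \in l_2$ for even $j$. For any $j$, the two neighbors $B_{j-1}$ and $B_{j+1}$ share a common parity, so both lie on one of the axis-lines $l_\epsilon \in \{l_1,l_2\}$, which also contains $P$. Hence $B_{j-1}, P, B_{j+1}$ are collinear for every $j$. Substituting a lift $v_P$ of $P$ into the defining formula \eqref{eqL0} applied to $B$, this collinearity forces $|u_{j-1}, v_P, u_{j+1}| = 0$ for each $j$, so every summand vanishes and $L_B(v_P) = 2m \cdot v_P$. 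The denominators $|u_{j-1}, u_j, u_{j+1}|$ are nonzero because $B_j$ lies on the axis-line opposite its two neighbors, so the three points are generically non-collinear. Projectively, $P$ is a fixed point of $L_B$ with eigenvalue $2m$.

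By Theorem \ref{thmConserve}, $L_B = L_A$, and from the upper triangular form of Proposition \ref{propAxisAligned} the eigenvalue $2m$ of $L_A$ has a one-dimensional eigenspace. A direct calculation on the matrix shows this eigenspace is spanned by $[X_0, Y_0, 1]^T$, where $X_0 = (x_1+x_3+\cdots+x_{2m-1})/m$ and $Y_0 = (y_2+y_4+\cdots+y_{2m})/m$. Since $v_P$ is also a $2m$-eigenvector with nonzero third coordinate, it must be a scalar multiple of $[X_0, Y_0, 1]^T$, which gives $P = (X_0, Y_0)$ as claimed.

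The main obstacle is spotting the collinearity argument that makes every term of \eqref{eqL0} vanish at $v_P$; once that is in place the rest is routine eigenvector bookkeeping for a triangular matrix.
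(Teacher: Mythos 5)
Your proposal is correct and follows essentially the same route as the paper's own proof: the collinearity of $B_{j-1}$, $P$, $B_{j+1}$ kills every summand of the defining formula so that $L_B(v_P) = 2m\,v_P$, and then $L_B = L_A$ together with the explicit upper-triangular matrix of Proposition \ref{propAxisAligned} pins down the $2m$-eigenvector. The only difference is cosmetic (you normalize the eigenvector to third coordinate $1$ and spell out the parity argument for the collinearity slightly more explicitly).
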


\begin{proof}
We know
\begin{displaymath}
L_B = L_A = \left[\begin{array}{ccc}
m & 0 & x_1 + x_3 + \ldots + x_{2m-1} \\
0 & m & y_2 + y_4 + \ldots + y_{2m} \\
0 & 0 & 2m \end{array} \right]
\end{displaymath}
The eigenspace for $\lambda=2m$ has dimension one and is spanned by
\begin{displaymath}
\left[\begin{array}{c}
x_1 + x_3 + \ldots x_{2m-1} \\
y_2 + y_4 + \ldots y_{2m} \\
m \end{array}\right].
\end{displaymath}
On the other hand, let $Q = \meet{l_1}{l_2}$.  Lift $Q$ to $v \in \mathbb{R}^3$ and lift $B_j$ to $u_j$.  For each $j$, we have that $B_{j-1}$, $Q$, and $B_{j+1}$ are collinear so $|u_{j-1}, v, u_{j+1}| = 0$.  By \eqref{eqL1}
\begin{displaymath}
L_B(v) = 2mv.
\end{displaymath}
So, $v$ equals up to scale the above eigenvector and
\begin{displaymath}
Q = \left(\frac{x_1+x_3 +\ldots + x_{2m-1}}{m}, \frac{y_2 + y_4 + \ldots + y_{2m}}{m}\right).
\end{displaymath}
\end{proof}

\begin{comment}
\section{A pentagram map based duality on polygons}
We discovered the results of this paper while investigating a certain notion of duality for polygons that is related to the pentagram map.  Although we were able to prove everything directly without reference to this duality, our original motivation may be of interest to some so we present it in this largely self-contained section.

Say that polygons $A$ and $B$ are in perspective with respect to a point $Q$ if each corresponding pair of vertices of $A$ and $B$ is collinear with $Q$.  Say $A$ and $B$ are in perspective with respect to a line $l$ if each corresponding pair of (extensions) of edges of $A$ and $B$ intersect at a point on $l$.

\begin{defin}
Let $A$ and $B$ be labeled polygons.  Say that $A$ and $B$ are \emph{dual through} a point $Q$ if
\begin{itemize}
\item $A$ and $B$ are in perspective with respect to $Q$ and
\item $T(A)$ and $T^{-1}(B)$ are in perspective with respect to $Q$.
\end{itemize}
Say $A$ and $B$ are \emph{dual about} a line $l$ if 
\begin{itemize}
\item $A$ and $B$ are in perspective with respect to $l$ and
\item $T(A)$ and $T^{-1}(B)$ are in perspective with respect to $l$.
\end{itemize}
\end{defin}
\end{comment}


\begin{thebibliography}{99}
%\bibitem{FZ1} S. Fomin and A. Zelevinsky, Cluster algebras I: Foundations, \textsl{J. Amer. Math. Soc.} \textbf{15} (2002), 497--529.
%\bibitem{FZ2} S. Fomin and A. Zelevinsky, Cluster algebras IV: Coefficients, \textsl{Compos. Math.} \textbf{143} (2007), 112--164.
\bibitem{GSTV} M. Gekhtman, M. Shapiro, S. Tabachnikov, and A. Vainshtein, Higher pentagram maps, weighted directed
 networks, and cluster dynamics, \textsl{Electron. Res. Announc. Math. Sci.} \textbf{19} (2012), 1--17.
\bibitem{G} M. Glick, The pentagram map and Y-patterns, \textsl{Adv. Math.} \textbf{227} (2011), 1019--1045.
\bibitem{GP} M. Glick and P. Pylyavskyy, $Y$-meshes and generalized pentagram maps, \textsl{Proc. Lond. Math. Soc.}, \textbf{112} (2016) 753--797.
%\bibitem{GK} A. Goncharov and R. Kenyon, Dimers and cluster integrable systems, \textsl{Ann. Sci. \'Ec. Norm. Sup\'er.} \textbf{46} (2013) 747--813.
\bibitem{KS1}  B. Khesin and F. Soloviev, Integrability of higher pentagram maps, \textsl{Math. Ann.} \textbf{357} (2013), 1005--10047.
\bibitem{KS2} B. Khesin and F. Soloviev, The geometry of dented pentagram maps, \textsl{J. Eur. Math. Soc.} \textbf{18} (2016), 147--179.
\bibitem{KS3} B. Khesin and F. Soloviev, Non-integrability vs. integrability in pentagram maps, \textsl{J. Geom. Phys.} \textbf{87} (2015), 275--285.
\bibitem{M1} G. Mari Beffa, On generalizations of the pentagram map: discretizations of AGD flows, \textsl{J. Nonlinear Sci.} \textbf{23} (2013), 303--334.
\bibitem{M2} G. Mari Beffa, On integrable generalizations of the pentagram map, \textsl{Int. Math. Res. Not. IMRN} (2015), 3669--3693.
\bibitem{OST1} V. Ovsienko, R. Schwartz, and S. Tabachnikov, The pentagram map: a discrete integrable system, \textsl{Comm. Math. Phys.} \textbf{299} (2010), 409-446.
\bibitem{OST2} V. Ovsienko, R. Schwartz, and S. Tabachnikov, Liouville-Arnold integrability of the pentagram map on closed polygons, \textsl{Duke Math. J.} \textbf{162} (2013), 2149--2196.
\bibitem{S1} R. Schwartz, The pentagram map, \textsl{Experiment. Math.} \textbf{1} (1992), 71--81.
\bibitem{S2} R. Schwartz, The pentagram map is recurrent, \textsl{Experiment. Math.} \textbf{10} (2001), 519--528.
\bibitem{S3} R. Schwartz, Discrete monodromy, pentagrams, and the method of condensation, \textsl{J. Fixed Point Theory Appl.} \textbf{3} (2008), 379--409.
\bibitem{So} F. Soloviev, Integrability of the Pentagram Map, \textsl{Duke Math. J.} \textbf{162} (2013), 2815--2853.
\bibitem{Y} Z. Yao, Glick's conjecture on the point of collapse of axis-aligned polygons under the pentagram maps, \texttt{arxiv:1410.7806}.
\end{thebibliography}
\end{document}